\documentclass[11pt]{amsart}
\usepackage[margin=1in]{geometry}
\usepackage{amsmath, amssymb, amsfonts, mathtools}
\usepackage{enumitem}
\usepackage{graphicx}
\usepackage{asymptote}
\usepackage{caption}
\usepackage{dsfont} 
\usepackage{comment}

\usepackage{thmtools}

\newtheorem{theorem}{Theorem}[section]
\newtheorem{proposition}[theorem]{Proposition}
\newtheorem{corollary}[theorem]{Corollary}

\newtheorem{lemma}[theorem]{Lemma}

\theoremstyle{definition}

\usepackage{xcolor}
\usepackage{hyperref}

\definecolor{LightBlue}{rgb}{0,0.8,1} 

\hypersetup{colorlinks=true, citecolor=LightBlue, linkcolor=blue,urlcolor=blue} 

\usepackage[noabbrev,capitalise,nameinlink]{cleveref}
\crefname{conjecture}{Conjecture}{Conjectures}

\DeclareMathOperator{\Star}{\mathsf{Star}}

\title{$e$-positive partitions for chromatic symmetric functions}
\author{Noah Kravitz}
\address{St John's College, Oxford and Mathematical Institute, University of Oxford; St Giles', Oxford OX1 3JP, UK}
\email{noah.kravitz@maths.ox.ac.uk}

\begin{document}

\maketitle

\begin{abstract}
We show that the partitions that always appear with nonnegative $e$-coefficients in chromatic symmetric functions of finite graphs are precisely the hook partitions.
\end{abstract}

\section{Introduction}

\subsection{Main result}

This note treats the natural, but previously overlooked, problem of determining which $e$-coefficients of chromatic symmetric functions of graphs can ever be negative.  The \emph{chromatic symmetric function} of a finite graph $G=(V,E)$ is the formal power series
$$X(G):=\sum_{\chi} \prod_{v \in V} x_{\chi(v)} \in \mathbb{Z}[[x_1, x_2, \ldots]],$$
where $\chi$ ranges over the proper vertex colorings of $G$ with the color set $\{1,2,\ldots \}$.  This object, introduced by Stanley~\cite{stanley} in 1995, is a refinement of the chromatic polynomial.  Indeed, evaluating $X(G)$ with $r$ of the $x_i$'s equal to $1$ and the remaining $x_i$'s equal to $0$ gives the number of proper $r$-colorings of $G$. The manner and extent to which $X(G)$ encodes other combinatorial information about $G$ has been studied extensively over the past three decades; see the overviews in \cite{DFvW,MMW,ST}.

The formal power series $X(G)$ is homogeneous of degree $|G|$, and it is a \emph{symmetric function} in the sense that it is invariant under permutations of the variables $x_1, x_2, \ldots$.  Among the natural $\mathbb{Z}$-bases of the fixed-degree homogeneous symmetric functions, one that has attracted particular attention is the \emph{elementary symmetric basis}.  For each positive integer $m$, set
$$e_m:=\sum_{1 \leq i_1<\cdots<i_m} x_{i_1} \cdots x_{i_m}.$$
For each integer partition $\lambda=(\lambda_1, \ldots, \lambda_r)$, define the \emph{elementary symmetric polynomial}
$$e_\lambda:=e_{\lambda_1} \cdots e_{\lambda_r}.$$
It is well known (see, e.g., \cite[Equation (2.4)]{macd}) that the $e_\lambda$'s, for $\lambda$ ranging over the partitions of size $n$, form a $\mathbb{Q}$-basis for the homogeneous symmetric functions of degree $n$.

It follows that for every graph $G$, there are unique integers $c_\lambda(G)$ (for $\lambda$ of size $|G|$) such that
$$X(G)=\sum_{\lambda}c_\lambda(G) e_\lambda.$$
There has been substantial interest, stemming partly from representation-theoretic motivations, in characterizing the \emph{$e$-positive graphs}, namely, the graphs $G$ such that $c_\lambda(G)\geq 0$ for every partition $\lambda$.  A particular focal point in this direction has been the Stanley--Stembridge Conjecture (see \cite{SS,hik}).  We take the ``dual'' perspective of characterizing the \emph{$e$-positive partitions}, namely, the partitions $\lambda$ such that $c_\lambda(G)\geq 0$ for every graph $G$.  The latter problem, which is new, turns out to have a clean solution.  A \emph{hook partition} is a partition with at most a single part larger than $1$.

\begin{theorem}\label{thm:main}
The $e$-positive partitions are precisely the hook partitions.
\end{theorem}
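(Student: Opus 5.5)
The theorem has two directions. First, $c_\lambda(G)\ge 0$ for every hook $\lambda$ and every $G$. Second, for every non-hook $\lambda$ there is some $G$ with $c_\lambda(G)<0$. I would do the negative direction first, since it reduces to a small family of examples via multiplicativity. For positivity I would use a quotient-ring argument together with Stanley's sink theorem.

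\textbf{Non-hook partitions.} I would use two facts: $X(G\sqcup H)=X(G)X(H)$ and $X(K_m)=m!\,e_m$. Suppose $X(H)=\sum_\nu c_\nu(H)e_\nu$ with $|H|=a+b$. Then
$$X\Bigl(H\sqcup \bigsqcup_i K_{\mu_i}\Bigr)=\prod_i \mu_i!\sum_\nu c_\nu(H)\,e_{\nu\cup\mu}.$$
Taking $\nu=(a,b)$, the only $\nu$ of size $a+b$ with $\nu\cup\mu=(a,b)\cup\mu$ is $(a,b)$ itself. So the coefficient of $e_{(a,b)\cup\mu}$ is $\prod\mu_i!\cdot c_{(a,b)}(H)$. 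A non-hook $\lambda$ has $\lambda_1\ge\lambda_2\ge 2$, so it suffices to find, for each pair $a\ge b\ge 2$, a graph $H_{a,b}$ on $a+b$ vertices with $c_{(a,b)}(H_{a,b})<0$.

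The base case is the claw, $X(K_{1,3})=e_{211}-2e_{22}+5e_{31}+4e_4$, which gives $c_{(2,2)}<0$. For general $(a,b)$ I would try claw-like graphs: a central vertex joined to a few cliques, or a claw with two of its leaves blown up into cliques $K_{a-1}$ and $K_{b-1}$. To compute two-part coefficients I would use Stanley's theorem. It says $\sum_{\ell(\nu)=j}c_\nu(G)$ equals the number of acyclic orientations of $G$ with exactly $j$ sinks. With $j=2$, this sum is a count I can compute for these graphs. I would then isolate $c_{(a,b)}$ by computing the other two-part coefficients, for instance via deletion--contraction-type identities or small explicit expansions. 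Finding a family that works uniformly in $(a,b)$ is the step I expect to be the main obstacle. I would first test small cases such as $(3,2)$ and $(3,3)$ by hand to guess the right family.

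\textbf{Hook partitions.} Let $I\subseteq\Lambda$ be the ideal spanned by the $e_\nu$ having at least two parts $\ge 2$. Then $\Lambda/I$ has basis the hook $e$'s, and the hook coefficients of $X(G)$ are exactly its image in $\Lambda/I$. Concretely $\Lambda/I\cong\mathbb{Q}[e_1]\oplus\bigoplus_{m\ge2}e_m\mathbb{Q}[e_1]$, with $e_me_{m'}=0$ for $m,m'\ge 2$.

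I would push Stanley's expansion $X(G)=\sum_{S\subseteq E}(-1)^{|S|}p_{\lambda(S)}$ through this quotient. By Newton's identities, each $p_k$ is $\pm k e_k$ plus terms of the form $e_1^{r}e_j$ modulo $I$. A product of power sums has only a few surviving terms, because products of two "big" parts vanish. I would then aim to show that the coefficient of $e_me_1^{n-m}$ is a signed sum that collapses, by a sign-reversing involution on edge sets (in the spirit of the broken-circuit/acyclic-orientation proofs), to a count of acyclic orientations with prescribed sink structure. A likely target is: acyclic orientations with $n-m+1$ sinks, weighted by a positive combinatorial factor.

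If this direct route becomes messy, my fallback is induction on $n$ using the triple-deletion or vertex-removal recurrences, verifying that the map to $\Lambda/I$ preserves positivity. In that case I would use $\ell(\nu)=j$ sink counts to pin down the hook coefficient with $j$ parts inductively.
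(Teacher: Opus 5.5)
There is a genuine gap: the proposal sets up correct frameworks for both directions but leaves the decisive step of each unexecuted.

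\textbf{Non-hook direction.} Your reduction to two-part partitions $(a,b)$ with $a\ge b\ge 2$, via $X(G\sqcup H)=X(G)X(H)$ and $X(K_m)=m!\,e_m$, is correct and is exactly the paper's reduction. But you never exhibit $H_{a,b}$, and that is the entire content of this direction. The sink-theorem route you sketch is also ill-suited: it only gives $\sum_{\ell(\nu)=2}c_\nu(G)$, so you would still need every other two-part coefficient. No special family is needed; the star $K_{1,a+b-1}$ works uniformly. With $n=a+b$, Stanley's expansion gives $X(K_{1,n-1})=\sum_{m=0}^{n-1}(-1)^m\binom{n-1}{m}p_{m+1}p_1^{n-m-1}$. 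Every term with $m<n-1$ carries a factor $p_1=e_1$, so since $b\ge 2$ only $(-1)^{n-1}p_n$ can contribute to $e_{(a,b)}$. Applying Newton's identity twice gives $(-1)^{n-1}p_n=ne_n-\sum_{i=1}^{n-1}ie_{n-i}e_i$ plus a combination of $e_\nu$ with at least three parts. Hence $c_{(a,b)}(K_{1,a+b-1})=-(a+b)$ if $a\neq b$, and $-a$ if $a=b$.

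\textbf{Hook direction.} Your ideal $I$ is legitimate. Computing in $\Lambda/I$ gives $p_k\equiv e_1^k+k\sum_{j=2}^{k}(-1)^{j-1}e_je_1^{k-j}$ modulo $I$, hence $[e_{(m,1^{n-m})}](p_\lambda)=(-1)^{m-1}\sum_{j:\lambda_j\ge m}\lambda_j$, which is Lemma~\ref{lem:hook-in-power-2}. The real difficulty is then the nonnegativity of $(-1)^{m-1}\sum_{S\subseteq E}(-1)^{|S|}\sum_{j:\lambda_j(S)\ge m}\lambda_j(S)$, and there you offer only an unspecified sign-reversing involution.

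The missing idea is to read the inner sum as the number of vertices whose component in $(V,S)$ has at least $m$ vertices. This splits $c_{(m,1^{n-m})}(G)=\sum_{v\in V}Q(G,v,m)$, where $Q(G,v,m)=(-1)^{m-1}\sum(-1)^{|S|}$ over those $S$ in which the component of $v$ has at least $m$ vertices. Each $Q(G,v,m)\ge 0$ then follows from the rooted recurrence $Q(G,v,m)=Q(G/e,v,m-1)+Q(G-e,v,m)$ for any non-loop edge $e$ at $v$. You induct on $m$ and $|E|$ over multigraphs, since contraction creates loops and parallel edges. The base cases are $m=1$, where the sum is $0$ or $1$, and $v$ incident to no non-loop edge, where $Q=0$.

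Your fallback cannot replace this. $X(G)$ satisfies no deletion--contraction recurrence among ordinary graphs, and triple-deletion needs a triangle and involves a subtraction, so it does not propagate positivity. For orientation: the combinatorial formula that does hold, Corollary~\ref{cor:comb} (for $G$ without isolated vertices), sums $\binom{s(\omega)-1}{n-m}$ over acyclic orientations with a unique sink, where $s(\omega)$ is the number of sources. It is proved by the same recurrence, not by an involution.
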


In other words, this theorem says that the potential obstructions to $e$-positivity of graphs are precisely the non-hook partitions.  We also provide a combinatorial interpretation (see Section~\ref{sec:combinatorial}) of the nonnegativity of $c_\lambda(G)$ when $\lambda$ is a hook partition.

\subsection{Further questions}

Theorem~\ref{thm:main} brings to mind several natural questions for future inquiry.  For each graph $G$, let $$N(G):=\{\lambda: c_\lambda(G)<0\}$$ be the set of partitions with negative $e$-coefficients in $X(G)$.  Theorem~\ref{thm:main} shows that the union of the $N(G)$'s (over all graphs $G$) is the set of non-hook partitions.  An ambitious general problem is characterizing all of the possibilities for $N(G)$.  Short of providing a full characterization, one might start by determining which \emph{pairs} of partitions can appear together in sets $N(G)$.

One could also ask which partitions appear in the sets $N(G)$ when $G$ ranges over a restricted class of graphs.  A natural starting point, at least from a historical perspective, would be the class of claw-free (or claw-contractible-free) graphs.  Indeed, the claw $K_{1,3}$ is the smallest non-$e$-positive graph, and Stanley~\cite{stanley} suggested that contractibility to the claw might be a driver of non-$e$-positivity; this notion was dispelled only recently by Dahlberg, Foley, and van Willigenburg~\cite{DFvW}.

Finally, one could study the same questions for the Schur basis.  It follows from a result of Kaliszewski~\cite{kal} that hook partitions always appear with nonnegative Schur-coefficients in chromatic symmetric functions.  The problem for non-hook partitions remains open.  In connection with the discussion from the previous paragrph, we also mention a well-known conjecture of Gasharov~\cite{gas}, recently disproven by Matherne and Morales~\cite{MM}, that claw-free graphs are Schur-positive.

\section{Proofs}

\subsection{Preliminaries on the power sum basis}
We will make repeated use of a basic result of Stanley on the expansion of chromatic symmetric functions in the power sum basis.

For each positive integer $m$, set $$p_{k}:=\sum_{i \geq 1} x_i^k.$$
For each partition $\lambda=(\lambda_1, \ldots, \lambda_r)$, define the \emph{power sum}
$$p_\lambda:=p_{\lambda_1} \cdots p_{\lambda_r}.$$
The $p_\lambda$'s, for $\lambda$ ranging over the partitions of size $n$, form another $\mathbb{Z}$-basis for the homogeneous symmetric functions of degree $n$.  Newton's identity (see, e.g., \cite[Equation (2.11')]{macd})
\begin{equation}\label{eq:newton}
(-1)^{n-1}p_n=ne_n+\sum_{i=1}^{n-1} (-1)^{i} e_{n-i} p_i
\end{equation}
provides a way to convert from the power sum basis to the elementary symmetric basis.

Let $G=(V,E)$ be a finite graph.  For each subset $S \subseteq E$, let $\lambda(S)$ be the partition (of size $|G|$) that records the sizes of the connected components of the subgraph of $G$ with edge set $S$.  Using an inclusion-exclusion argument, Stanley~\cite[Theorem 2.5]{stanley} showed that
\begin{equation}\label{eq:stanley}
X(G)=\sum_{S \subseteq E}(-1)^{|S|}p_{\lambda(S)};
\end{equation}
this simple fact is surprisingly useful.

\subsection{Non-hook partitions}

To show that non-hook partitions can appear with negative $e$-coefficients, we provide an explicit construction.

\begin{proposition}\label{prop:non-hook}
If $\lambda$ is a non-hook partition, then there is some graph $G$ such that $c_\lambda(G)<0$.
\end{proposition}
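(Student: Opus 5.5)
We first reduce to partitions with exactly two parts, using disjoint unions with cliques, and then handle two-part partitions with one explicit family of graphs, analyzed through \eqref{eq:stanley} and \eqref{eq:newton}.

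\emph{Step 1: reduction to two parts.} Write $\lambda=(\lambda_1,\lambda_2,\lambda_3,\ldots,\lambda_r)$ with $\lambda_1\ge\lambda_2\ge 2$, which is what being a non-hook means. Chromatic symmetric functions are multiplicative under disjoint union, and $X(K_m)=m!\,e_m$. So if $H$ is a graph on $\lambda_1+\lambda_2$ vertices, then
$$X\bigl(H\sqcup K_{\lambda_3}\sqcup\cdots\sqcup K_{\lambda_r}\bigr)=\Bigl(\prod_{i\ge 3}\lambda_i!\Bigr)\,X(H)\,e_{\lambda_3}\cdots e_{\lambda_r}.$$
Take $\mu\vdash\lambda_1+\lambda_2$ and form $\mu\cup(\lambda_3,\ldots,\lambda_r)$. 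This equals $\lambda$ only when $\mu=(\lambda_1,\lambda_2)$, since we may remove the fixed multiset $\{\lambda_3,\ldots,\lambda_r\}$ from $\lambda$. Hence $c_\lambda$ of the disjoint union is a positive multiple of $c_{(\lambda_1,\lambda_2)}(H)$. It therefore suffices, for every $a\ge b\ge 2$, to find a graph $H$ on $a+b$ vertices with $c_{(a,b)}(H)<0$.

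\emph{Step 2: a formula for two-part coefficients.} Expand \eqref{eq:stanley} in the $e$-basis by applying \eqref{eq:newton} to each factor $p_k$. Each $p_k$ is a combination of $e_\nu$ with $\nu\vdash k$, so $p_\mu$ contributes to $e_{(a,b)}$ only if $\mu$ has at most two parts. That leaves two cases:
\begin{itemize}
\item $\mu=(a+b)$, where the coefficient of $e_{(a,b)}$ in $p_{a+b}$ is $(-1)^{a+b}(a+b)\cdot\frac{1}{m}$, with $m=2$ if $a=b$ and $m=1$ otherwise;
\item $\mu=(a,b)$, where the coefficient is $(-1)^{a+b}ab$, from the $e_a$-coefficient $(-1)^{a-1}a$ of $p_a$.
\end{itemize}
Let $C(H)=\sum(-1)^{|S|}$ over connected spanning edge sets $S$, and let $D_{a,b}(H)=\sum(-1)^{|S|}$ over edge sets whose components have sizes $a$ and $b$. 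Then
$$(-1)^{a+b}c_{(a,b)}(H)=\tfrac{a+b}{m}\,C(H)+ab\,D_{a,b}(H).$$
Both signed sums are standard. The sum $(-1)^{n-1}C(H)$ is, up to sign, the linear coefficient of the chromatic polynomial, which is nonnegative for connected $H$. The sum $D_{a,b}(H)$ is a sum over bipartitions $V=A\sqcup B$ with $|A|=a$, $|B|=b$ of products of the same quantities for $H[A]$ and $H[B]$.

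\emph{Step 3: the construction.} I would generalize the claw, for which $c_{(2,2)}(K_{1,3})=-2$. Let $H$ consist of a vertex $v$ joined completely to a clique $K_{a-1}$, to a clique $K_{b-1}$, and to one extra pendant vertex $w$. This has $a+b$ vertices and equals the claw when $a=b=2$. The relevant quantities are computable here. For a chordal graph with perfect elimination ordering, the linear coefficient of the chromatic polynomial is an explicit product. The splits $(A,B)$ in which both $H[A]$ and $H[B]$ are connected are constrained: the side not containing $v$ must lie inside a single clique or be $\{w\}$. So $D_{a,b}$ is a short sum of products of factorials.

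\emph{Main obstacle.} The hardest step is checking that this evaluates to a negative number for all $a\ge b\ge 2$, with signs handled correctly. If the first choice of $H$ fails in some range, for example when $a=b$, I would adjust it. One option is to replace the pendant vertex by a pendant edge. Another is to take two cliques $K_a$ and $K_b$ sharing a vertex, plus a leaf, and then rerun the same two-term computation.
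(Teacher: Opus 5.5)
Your Steps 1 and 2 are correct. Step 1 is exactly the paper's reduction, and the two-term formula for $c_{(a,b)}(H)$ checks out. The gap is in Step 3: you never evaluate that formula for your graph $H$. Instead you flag the sign check as the ``main obstacle'' and list fallback graphs in case it fails. The proposition asserts precisely that some graph has $c_{(a,b)}<0$, and as written no graph has been shown to have this, so the proposal does not prove the statement.

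The gap closes quickly with your own tools. Set $\beta(G):=(-1)^{|G|-1}C(G)$. This quantity is $\ge 0$, vanishes for disconnected $G$, is multiplicative over blocks, and satisfies $\beta(K_n)=(n-1)!$. Your identity then reads
\[
c_{(a,b)}(H)=-\tfrac{a+b}{m}\,\beta(H)+ab\sum_{\{A,B\}}\beta(H[A])\,\beta(H[B]),
\]
which is a negative term plus a nonnegative one. For your $H$ one has $\beta(H)=(a-1)!\,(b-1)!$. In any split, the side avoiding $v$ must be a connected subset of $H-v=K_{a-1}\sqcup K_{b-1}\sqcup\{w\}$ of size $a$ or $b$.

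If $a=b$, there is no such split, and $c_{(a,a)}(H)=-a\,(a-1)!^2=-a!\,(a-1)!$. If $a>b$, the splits are the $\binom{a-1}{b}$ choices of a $b$-set $B$ inside $K_{a-1}$. Each contributes $\beta(H[B])\,\beta(H[A])=(b-1)!\cdot(a-b-1)!\,(b-1)!$, so
\[
c_{(a,b)}(H)=-(a+b)(a-1)!\,(b-1)!+a!\,(b-1)!=-(a-1)!\,b!<0 .
\]
So your first choice of $H$ does work; it just had to be checked.

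The cliques, however, only create positive split terms that must then be outweighed. The paper uses the plain star $\Star_{a+b}$, which already generalizes the claw. In your language, the side avoiding the center in any split into two connected pieces is a single leaf. So for $a,b\ge 2$ the split sum is empty, and $\beta(\Star_{a+b})=1$ because the star is a tree. This gives $c_{(a,b)}(\Star_{a+b})=-\frac{a+b}{m}$, which is Lemma~\ref{lem:star}. The paper obtains the same number by applying Newton's identity twice to the single surviving term $(-1)^{n-1}p_n$, rather than through your $C$/$D$ decomposition.
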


Our construction, motivated by the example of the claw, will be the disjoint union of a star graph and some number of complete graphs.  Let $\Star_n:=K_{1,n-1}$ denote the star graph on $n$ vertices.

\begin{lemma}\label{lem:star}
Let $n \geq 4$.  Then $c_{(n-k,k)}(\Star_n)<0$ for every $2 \leq k \leq n/2$.
\end{lemma}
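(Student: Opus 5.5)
The plan is to compute $c_{(n-k,k)}(\Star_n)$ exactly, using Stanley's power sum expansion \eqref{eq:stanley} together with the standard expansion of a single power sum $p_m$ in the elementary basis. In fact only one term of \eqref{eq:stanley} will contribute.

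\emph{Step 1: the power sum expansion of the star.} Every edge subset $S$ of $\Star_n$ with $|S|=j$ is a substar. It has one component of size $j+1$ and $n-1-j$ isolated vertices, so $\lambda(S)=(j+1,1^{n-1-j})$. Hence \eqref{eq:stanley} becomes
\[
X(\Star_n)=\sum_{j=0}^{n-1}(-1)^j\binom{n-1}{j}\,p_{j+1}\,p_1^{\,n-1-j}.
\]

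\emph{Step 2: expanding $p_m$ in the $e$-basis.} Iterating Newton's identity \eqref{eq:newton}, or quoting the classical formula, gives
\[
p_m=\sum_{\mu\vdash m}(-1)^{m-\ell(\mu)}\frac{m\,(\ell(\mu)-1)!}{\prod_i m_i(\mu)!}\,e_\mu ,
\]
where $\ell(\mu)$ is the number of parts and $m_i(\mu)$ is the multiplicity of the part $i$. I expect this to be the only step needing care. If one prefers not to quote it, it suffices to extract the coefficient of $e_{(a,b)}$ with $a,b\ge 2$ from \eqref{eq:newton} by induction on $m$. This is doable because only the terms $e_{m-i}p_i$ with $p_i$ contributing a single part can produce a two-part partition, so only a short computation is needed.

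\emph{Step 3: isolating the coefficient.} Since $p_1=e_1$, the term $p_{j+1}p_1^{n-1-j}$ is a combination of $e_\mu e_1^{n-1-j}$ with $\mu\vdash j+1$. Each such product is $e_\nu$, where $\nu$ is $\mu$ with $n-1-j$ parts equal to $1$ appended. Because $2\le k\le n/2$, both parts of $(n-k,k)$ are at least $2$, so $(n-k,k)$ can arise only when $n-1-j=0$ and $\mu=(n-k,k)$. Thus only the $j=n-1$ term contributes. Using Step 2 with $\ell(\mu)=2$, its coefficient is
\[
c_{(n-k,k)}(\Star_n)=(-1)^{n-1}\cdot(-1)^{n-2}\frac{n\cdot 1!}{\prod_i m_i!}=-\frac{n}{\prod_i m_i!}.
\]
This equals $-n$ if $k<n/2$ and $-n/2$ if $k=n/2$, and is strictly negative in both cases. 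The hypothesis $n\ge 4$ is exactly what guarantees that some $k$ with $2\le k\le n/2$ exists.
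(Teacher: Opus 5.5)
Your proposal is correct and follows the paper's proof: you reduce to the single $j=n-1$ term $(-1)^{n-1}p_n$ of the star's power-sum expansion and read off the $e_{(n-k,k)}$-coefficient as $-n$, or $-n/2$ when $k=n/2$. The only difference is cosmetic: you quote the classical closed-form $e$-expansion of $p_n$ (which is correct), whereas the paper applies Newton's identity twice, and these amount to the same computation.
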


\begin{proof}
It is an immediate consequence of \eqref{eq:stanley} (see \cite[Theorem 8]{CvW} for a detailed explanation) that
\begin{equation*}\label{eq:star-expansion}
X(\Star_n)=\sum_{m=0}^{n-1}(-1)^m \binom{n-1}{m}p_{m+1} p_{1}^{n-m-1}.
\end{equation*}
Recall that each $p_k$ is a linear combination of the functions $e_\lambda$ for $\lambda$'s of size $k$, and that $e_\lambda e_{\lambda'}=e_{\lambda \sqcup \lambda'}$.  Thus the product $p_{m+1}p_{1}^{n-m-1}$ is a linear combination of the functions $e_\lambda$ for $\lambda$'s with at least $n-m-1$ parts of size $1$.  Since we are concerned with only the $e_{(n-k,k)}$-coefficient (where $k \geq 2$), we can restrict our attention to the contribution of the $m=n-1$ term, namely, $(-1)^{n-1}p_n$.

A double application of Newton's identity \eqref{eq:newton} gives
\begin{align*}
(-1)^{n-1}p_n &=ne_n+\sum_{i=1}^{n-1} (-1)^{i} e_{n-i} p_i\\
 &=ne_n-\sum_{i=1}^{n-1} e_{n-i} \left(i e_i+\sum_{j=1}^{i-1} (-1)^j e_{i-j}p_j \right)\\
 &=ne_n-\sum_{i=1}^{n-1} ie_{n-i}e_i-\sum_{1 \leq j<i<n} (-1)^j e_{n-i} e_{i-j}p_j.
\end{align*}
The last sum is a linear combination of the functions $e_\lambda$ for $\lambda$'s with at least $3$ parts, so we may ignore it.  Thus, examining the first sum, we see that the $e_{(n-k,k)}$-coefficient of $X(\Star_n)$ is $-n$ if $k \neq n/2$ (with contributions of $-k$ and $-(n-k)$ from $i=k$ and $i=n-k$) and is $-n/2$ if $k=n/2$ (with just a contribution of $-k$ from $i=k$); either way, we have $c_{(n-k,k)}(\Star_n)<0$.
\end{proof}

The deduction of Proposition~\ref{prop:non-hook} is now quick.

\begin{proof}[Proof of Proposition~\ref{prop:non-hook}]
Let $\lambda=(\lambda_1, \ldots, \lambda_r)$ be a non-hook partition.  Then $r \geq 2$ and $\lambda_2 \geq 2$.  Consider the disjoint union $$G=G(\lambda):=\Star_{\lambda_1+\lambda_2}\sqcup K_{\lambda_3}\sqcup\cdots\sqcup K_{\lambda_r}.$$
Since $X(K_m)=m!e_m$ for every $m \geq 1$, we have
$$c_\lambda(G)=c_{(\lambda_1,\lambda_2)}(\Star_{\lambda_1+\lambda_2}) \cdot \lambda_3! \cdots \lambda_r!,$$
which is negative by Lemma~\ref{lem:star}.
\end{proof}

\subsection{Hook partitions}
The main work is showing that hook partitions always appear with nonnegative $e$-coefficients.

\begin{proposition}\label{prop:hook}
If $\lambda$ is a hook partition, then $c_\lambda(G) \geq 0$ for every graph $G$.
\end{proposition}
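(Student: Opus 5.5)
We will prove Proposition~\ref{prop:hook} by working modulo the non-hook part of the elementary basis. Let $I$ be the $\mathbb{Q}$-span of the $e_\lambda$ with $\lambda$ non-hook, i.e.\ with at least two parts of size $\geq 2$. Since $e_\lambda e_\mu=e_{\lambda\sqcup\mu}$, $I$ is the ideal generated by the products $e_ie_j$ with $i,j\geq 2$. The quotient $\Lambda/I$ has basis $e_1^{n-a}e_a$, and these correspond to the hooks $(a,1^{n-a})$. So $c_{(a,1^{n-a})}(G)$ is just the coefficient of $e_ae_1^{n-a}$ in the image of $X(G)$ in $\Lambda/I$. By \eqref{eq:stanley}, it then suffices to compute each $p_k \bmod I$ explicitly and multiply out.

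For this we use generating functions instead of iterating Newton's identity \eqref{eq:newton}. Write $E(t)=\sum_m e_mt^m=1+e_1t+F(t)$ with $F(t)=\sum_{m\geq2}e_mt^m$, so $F^2\equiv0$ modulo $I$. Then
\[
\log E(t)\equiv \log(1+e_1t)+\frac{F(t)}{1+e_1t}\pmod I .
\]
Since $\sum_{k\geq1}(-1)^{k-1}p_kt^k/k=\log E(t)$, this gives a closed form
\[
p_k\equiv(-1)^{k-1}\Bigl(e_1^k+k\sum_{a=2}^{k}(-1)^{a}e_ae_1^{k-a}\Bigr)\pmod I .
\]
The signs still need to be checked carefully against small cases, e.g.\ $p_2=e_1^2-2e_2$. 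A product $p_{\lambda(S)}$ reduces by taking the $e_1$-part from all but at most one factor. So for $a\geq 2$ the coefficient of $e_ae_1^{n-a}$ in $(-1)^{|S|}p_{\lambda(S)}$ is a sum over components $C$ of $(V,S)$ with $|C|\geq a$, each contributing roughly $\pm|C|$. The hook $(1^n)$, i.e.\ $a=1$, is handled separately: there the coefficient equals the number of acyclic orientations with a unique sink, which is nonnegative by Stanley's sink theorem, or one can read it off the same computation.

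This yields a formula
\[
c_{(a,1^{n-a})}(G)=\sum_{S\subseteq E}\ \sum_{\substack{C \text{ component of }(V,S)\\ |C|\geq a}} \epsilon(S,C)\,|C|,
\]
with explicit signs $\epsilon$ depending on $|S|$ and $|C|$. The rewriting that will be needed is to swap the sums: fix a vertex set $C$, which must induce a connected subgraph, and a root $v\in C$ to absorb the factor $|C|$. The inner sum over $S$ then factors as an alternating sum over spanning connected edge sets of $G[C]$ times an alternating sum over edge sets of $G-C$. Both factors are classical Möbius/chromatic-type quantities: the first is essentially the linear coefficient of the chromatic polynomial of $G[C]$, and the second is a chromatic-polynomial evaluation of $G-C$ at $1$. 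Each has a known sign and a combinatorial interpretation via no-broken-circuit sets (Whitney) or acyclic orientations with a unique fixed sink (Greene--Zaslavsky).

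The main obstacle will be making the signs line up so that every term is nonnegative. If the signs depend only on $|C|$ and $a$ in a way that cancels, this gives a manifestly nonnegative count: rooted connected induced subgraphs of size at least $a$, weighted by unique-sink acyclic orientations. If they do not cancel outright, the fallback is a sign-reversing involution. We fix a total order on $E$, restrict $S$ to NBC (no-broken-circuit) sets via the standard involution toggling the minimal edge that completes a broken circuit, and check that the surviving NBC forests all carry the same sign for the hook coefficient. That sign analysis is where we expect the real work to lie. It would also produce the combinatorial interpretation promised in Section~\ref{sec:combinatorial}.
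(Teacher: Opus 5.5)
The preliminary reduction is sound. Working modulo $I$ with generating functions is actually a slicker route to Lemmas~\ref{lem:hook-in-power} and~\ref{lem:hook-in-power-2} than the paper's induction. Two slips need fixing:
\begin{itemize}
\item Your displayed closed form has the wrong signs. Doing your own expansion of $\log E(t)$ correctly gives $p_k\equiv e_1^k+k\sum_{a=2}^{k}(-1)^{a-1}e_ae_1^{k-a}\pmod I$; for example $p_3=e_1^3-3e_2e_1+3e_3$. Consequently $\epsilon(S,C)=(-1)^{|S|+a-1}$, which does not depend on $|C|$.
\item $c_{(1^n)}(G)=\sum_S(-1)^{|S|}$ equals $1$ if $E=\emptyset$ and $0$ otherwise. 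Unique-sink orientations count $c_{(n)}(G)$, not $c_{(1^n)}(G)$. This does not affect nonnegativity.
\end{itemize}

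The genuine gap is the nonnegativity itself, which is the whole content of the proposition: you leave it as a hope, and both routes you suggest fail. Carry out your swap: fix a root $v$ and the vertex set $C\ni v$ of its component. The inner sum becomes $\bigl(\sum_T(-1)^{|T|}\bigr)\cdot[E(G-C)=\emptyset]$, with $T$ ranging over connected spanning edge sets of $G[C]$. By Greene--Zaslavsky the first factor is $(-1)^{|C|-1}|Z(G[C],v)|$. So you obtain
\begin{equation*}
c_{(a,1^{n-a})}(G)=\sum_{v\in V}\ \sum_{\substack{C\ni v,\ |C|\ge a\\ V\setminus C\text{ independent}}}(-1)^{|C|-a}\,|Z(G[C],v)|,
\end{equation*}
whose signs alternate with $|C|$, so nothing cancels outright. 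For $G=P_3$ and $a=2$ the terms are $2+2-3=1$. The NBC fallback fails for the same reason. The involution only discards non-NBC sets, and each surviving NBC forest $S$ still carries sign $(-1)^{|S|+a-1}$. For $P_3$ every edge set is NBC, and the contributions are $0,+2,+2,-3$. So the surviving forests do not all have the same sign. Genuine cancellation between terms of opposite sign is required, and your proposal gives no mechanism for it.

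The missing idea, which is the paper's, is to stop right after rooting. At that point you have $c_{(a,1^{n-a})}(G)=\sum_v Q(G,v,a)$, where $Q(G,v,m)=(-1)^{m-1}\sum_S(-1)^{|S|}$ over those $S$ for which $v$'s component has size at least $m$. Instead of splitting by the vertex set $C$, split on whether $S$ contains a fixed non-loop edge $e=\{v,u\}$.
\begin{itemize}
\item Contracting $e$ lowers both $|S|$ and the size threshold by one, so the signs match and $Q(G,v,m)=Q(G/e,v,m-1)+Q(G-e,v,m)$.
\item The base cases are $Q(G,v,1)=[E=\emptyset]$, and $Q=0$ when $v$ has no non-loop edge.
\item Induct on $m$ and then on $|E|$, working with multigraphs because contraction creates loops and parallel edges. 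This gives $Q\ge 0$.
\end{itemize}
Proposition~\ref{prop:comb-interpret} shows that your alternating sum over $C$ for fixed $v$ in fact equals the manifestly nonnegative sum $\sum_{\omega}\binom{s(\omega)-1}{|\overline V|-a}$.
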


We record the following facts about the coefficients of hook partitions in the elementary symmetric basis expansion of a power sum.  For a symmetric function $f$ and a partition $\lambda$, let $[e_\lambda](f)$ denote the coefficient of $e_\lambda$ when $f$ is expressed in the elementary symmetric basis.  Write $(m,1^{n-m})$ for the hook partition with one part of size $m$ and $n-m$ parts of size $1$.

\begin{lemma}\label{lem:hook-in-power}
Let $\ell \geq 1$ and $n \geq m \geq 2$.  Then
$$[e_{(1^\ell)}](p_\ell)=1 \quad \text{and} \quad [e_{(m,1^{n-m})}](p_n)=(-1)^{m-1} n.$$
\end{lemma}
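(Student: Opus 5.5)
The plan is to argue by induction on $n$, using Newton's identity \eqref{eq:newton} to express $p_n$ through $e_n$ and the products $e_{n-i}p_i$ with $i<n$. The key observation is that extracting the coefficient of a hook $e_\lambda$ from a product $e_{n-i}p_i$ is easy. Since $e_\mu e_{\mu'}=e_{\mu\sqcup\mu'}$ and $p_i$ expands in $e_\nu$'s with $|\nu|=i$, the product $e_{n-i}p_i$ is a combination of $e_{(n-i)\sqcup\nu}$. Such a term can equal $e_\lambda$ for a hook $\lambda$ only if $n-i$ is itself one of the parts of $\lambda$, so we need $n-i=1$, or $n-i=m$ in the hook case. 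In each case the relevant coefficient of $e_{n-i}p_i$ equals a coefficient of $p_i$ for a smaller hook.

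\emph{First identity.} Write $a_\ell:=[e_{(1^\ell)}](p_\ell)$. We have $p_1=e_1$, so $a_1=1$. For $\ell\ge 2$, take the coefficient of $e_{(1^\ell)}$ on both sides of \eqref{eq:newton} with $n=\ell$. The term $\ell e_\ell$ contributes nothing. Among the products $e_{\ell-i}p_i$, only the one with $\ell-i=1$ (that is, $i=\ell-1$) can contain $e_{(1^\ell)}$, and it contributes $(-1)^{\ell-1}a_{\ell-1}$. Hence $(-1)^{\ell-1}a_\ell=(-1)^{\ell-1}a_{\ell-1}$, so $a_\ell=a_{\ell-1}=\cdots=1$.

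\emph{Second identity.} Fix $m\ge 2$ and set $b_n:=[e_{(m,1^{n-m})}](p_n)$ for $n\ge m$.

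For the base case $n=m$, extract the coefficient of $e_m$ from \eqref{eq:newton}. The term $me_m$ contributes $m$. Every product $e_{m-i}p_i$ with $1\le i\le m-1$ is a combination of $e_\lambda$'s with at least two parts, so none of them contains $e_m$. Thus $(-1)^{m-1}b_m=m$, that is, $b_m=(-1)^{m-1}m$.

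For $n>m$, the term $ne_n$ contributes nothing to the coefficient of $e_{(m,1^{n-m})}$. Among the products $e_{n-i}p_i$, only two can contribute:
\begin{itemize}
\item $n-i=1$ (so $i=n-1\ge m$), which contributes $(-1)^{n-1}b_{n-1}$;
\item $n-i=m$ (so $i=n-m\ge 1$), which contributes $(-1)^{n-m}[e_{(1^{n-m})}](p_{n-m})=(-1)^{n-m}$ by the first identity.
\end{itemize}
These two cases are distinct because $m\ge 2$. This gives the recurrence
$$(-1)^{n-1}b_n=(-1)^{n-1}b_{n-1}+(-1)^{n-m},$$
that is, $b_n=b_{n-1}+(-1)^{m-1}$. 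Starting from $b_m=(-1)^{m-1}m$, it follows that $b_n=(-1)^{m-1}n$.

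The only step needing care is the claim that exactly these terms contribute. This rests on uniqueness of the $e$-expansion together with $e_\mu e_{\mu'}=e_{\mu\sqcup\mu'}$: the coefficient of $e_{(m,1^{n-m})}$ in $e_{n-i}p_i$ equals the coefficient in $p_i$ of the partition obtained by deleting one part $n-i$ from the hook. That coefficient is zero whenever the hook has no part equal to $n-i$.
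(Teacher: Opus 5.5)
Your proof is correct, and for the second identity it is the paper's argument: Newton's identity with only the $e_1p_{n-1}$ and $e_mp_{n-m}$ terms surviving, and induction on $n$ (equivalently $n-m$) with $m$ fixed. The only difference is the first identity, which you obtain by the same Newton recursion; the paper instead observes that $e_{(1^\ell)}$ is the only $e_\lambda$ of degree $\ell$ containing the pure power $x_1^\ell$, which has coefficient $1$ in both $p_\ell$ and $e_1^\ell$.
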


\begin{proof}
The first statement is immediate from the fact that $e_{(1^\ell)}$ is the only elementary symmetric polynomial of degree $\ell$ that contains pure powers $x_i^\ell$.

We prove the second statement by induction on $n-m$.  From Newton's identity~\eqref{eq:newton}, we read off
\begin{align*}
[e_{(m,1^{n-m})}](p_n) &=[e_{(m,1^{n-m})}] \left((-1)^{n-1}ne_n+\sum_{i=1}^{n-1} (-1)^{n-1+i} e_{n-i}p_i \right).
\end{align*}
For the base case $m=n$, only the first term contributes, and we obtain the desired identity $$[e_{(m,1^{n-m})}](p_n)=(-1)^{n-1}n.$$
For the induction step, suppose that $m<n$.  Our induction hypothesis gives
\begin{align*}
[e_{(m,1^{n-m})}](p_n) &=[e_{(m,1^{n-m})}] \left((-1)^{m-1}e_m p_{n-m}+e_1 p_{n-1}\right)\\
 &=(-1)^{m-1}[e_{(1^{n-m})}] \left(p_{n-m}\right)+[e_{(m,1^{n-m-1})}]\left(p_{n-1}\right)\\
 &=(-1)^{m-1}+(-1)^{m-1}(n-1)\\
 &=(-1)^{m-1}n,
\end{align*}
as desired (the first equality is where we make crucial use of the condition $m \geq 2$).
\end{proof}

\begin{lemma}\label{lem:hook-in-power-2}
Let $n \geq m \geq 2$, and let $\lambda=(\lambda_1, \ldots, \lambda_r)$ be a partition of size $n$.  Then
$$[e_{(m,1^{n-m})}](p_\lambda)=(-1)^{m-1} \sum_{j: \lambda_j \geq m} \lambda_j.$$
\end{lemma}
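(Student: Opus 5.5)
Write $p_\lambda=p_{\lambda_1}\cdots p_{\lambda_r}$ and expand each factor $p_{\lambda_j}$ in the elementary basis. Since $e_\mu e_{\mu'}=e_{\mu\sqcup\mu'}$, the coefficient of $e_{(m,1^{n-m})}$ in the product is a sum over all ways of choosing, for each $j$, a partition $\mu^{(j)}$ of $\lambda_j$ with $\mu^{(1)}\sqcup\cdots\sqcup\mu^{(r)}=(m,1^{n-m})$. Each such choice contributes $\prod_j [e_{\mu^{(j)}}](p_{\lambda_j})$. Because $m\geq 2$, the multiset $(m,1^{n-m})$ has exactly one part that is not $1$. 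So in any valid choice there is exactly one index $j_0$ with $\mu^{(j_0)}=(m,1^{\lambda_{j_0}-m})$, and this requires $\lambda_{j_0}\geq m$. Every other index $j\neq j_0$ has $\mu^{(j)}=(1^{\lambda_j})$.

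Conversely, for each index $j_0$ with $\lambda_{j_0}\geq m$, the assignment above gives a valid decomposition, and it is the only one with the large part coming from factor $j_0$. So the terms are indexed by the indices $j_0$ (not the distinct values), with $\lambda_{j_0}\geq m$. This matches the sum $\sum_{j:\lambda_j\geq m}$ in the statement, which also counts repeated parts separately.

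Now apply Lemma~\ref{lem:hook-in-power} to evaluate each term. The factor for $j_0$ is $[e_{(m,1^{\lambda_{j_0}-m})}](p_{\lambda_{j_0}})=(-1)^{m-1}\lambda_{j_0}$. Each other factor is $[e_{(1^{\lambda_j})}](p_{\lambda_j})=1$, also by the lemma (the case $\ell=\lambda_j\ge1$). Hence the term indexed by $j_0$ equals $(-1)^{m-1}\lambda_{j_0}$, and summing over $j_0$ gives the claimed formula.

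The only point needing care is making the coefficient extraction from the product rigorous. I would justify it by noting that the $e_\mu$ form a basis and that multiplication of basis elements is concatenation of partitions. The coefficient of $e_\nu$ in $\prod_j f_j$ is therefore $\sum \prod_j [e_{\mu^{(j)}}](f_j)$ over ordered tuples $(\mu^{(1)},\dots,\mu^{(r)})$ with $|\mu^{(j)}|=\lambda_j$ and $\bigsqcup_j\mu^{(j)}=\nu$. For $\nu$ a hook with $m\ge2$, the enumeration of these tuples is the counting done above. No induction is needed beyond Lemma~\ref{lem:hook-in-power}.
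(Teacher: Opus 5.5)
Your proposal is correct and follows the paper's proof essentially verbatim. Like the paper, you expand each $p_{\lambda_j}$ in the $e$-basis and note that, since $m\ge 2$, the unique part $m$ lands in exactly one factor $j_0$ with $\lambda_{j_0}\ge m$. You then evaluate each resulting term as $(-1)^{m-1}\lambda_{j_0}$ using Lemma~\ref{lem:hook-in-power}.
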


\begin{proof}
We have
\begin{equation}\label{eq:hook-in-power}
[e_{(m,1^{n-m})}](p_\lambda)=\sum_{(\mu^{(1)}, \ldots, \mu^{(r)}) \in T}~ \prod_{i=1}^r [e_{\mu^{(i)}}]\left(p_{\lambda_i} \right),
\end{equation}
where $T$ is the set of tuples of partitions $(\mu^{(1)}, \ldots, \mu^{(r)})$ such that each $\mu^{(i)}$ has size $\lambda_i$ and $\mu^{(1)} \sqcup \cdots \sqcup \mu^{(r)}=(m,1^{n-m})$.  The set $T$ admits a simple description: It consists of the tuples $(\mu^{(1)}, \ldots, \mu^{(r)})$ with
$$\mu^{(j)}=(m, 1^{\lambda_j-m}) \quad \text{and} \quad \mu^{(i)}=(1^{\lambda_i}) \text{ for all $i \neq j$},$$
where $j$ is an index with $\lambda_j \geq m$.  By Lemma~\ref{lem:hook-in-power}, each such tuple contributes $(-1)^{m-1}\lambda_j$ to \eqref{eq:hook-in-power}.
\end{proof}

The idea behind the proof of Proposition~\ref{prop:hook} is that Stanley's identity~\eqref{eq:stanley} and Lemma~\ref{lem:hook-in-power-2} lead to an expression for $c_\lambda(G)$ that is amenable to a deletion-contraction argument, which we isolate in advance.  Given a multigraph $G=(V,E)$ (allowing loops and parallel edges)\footnote{The passage to multigraphs is necessary only for our inductive scheme in the following lemma.} and a vertex $v \in V$, let $A(G,v,m)$ denote the set of edge subsets $S \subseteq E$ such that in the subgraph of $G$ with edge set $S$, the connected component of $v$ has size at least $m$.  Define the quantity
$$Q(G,v,m):=(-1)^{m-1}\sum_{S \in A(G,v,m)}(-1)^{|S|}.$$
We will deduce the nonnegativity of $c_\lambda(G)$ from the nonnegativity of the quantities $Q(G,v,m)$.

\begin{lemma}\label{lem:deletion-contraction}
Let $m \geq 1$, and let $G=(V,E)$ be a finite multigraph.  Then for each $v \in V$, we have $Q(G,v,m) \geq 0$.
\end{lemma}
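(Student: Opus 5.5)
The plan is to argue by induction on the number of edges $|E|$, using a deletion-contraction recurrence for $Q(G,v,m)$ obtained from an edge incident to $v$. This is why the lemma is stated for multigraphs: contracting an edge $e=vw$ turns edges parallel to $e$ into loops at the merged vertex, and may create parallel edges.

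\emph{Trivial cases.} If $m=1$, then $A(G,v,1)$ consists of all subsets of $E$, so $Q(G,v,1)=\sum_{S\subseteq E}(-1)^{|S|}$. This equals $1$ if $E=\emptyset$ and $0$ otherwise, so it is nonnegative. Now suppose $m\ge 2$ and that $v$ is incident to no non-loop edge. Then for every $S$ the component of $v$ is $\{v\}$, which has size $1<m$. Hence $A(G,v,m)=\emptyset$ and $Q(G,v,m)=0$. These cases cover all multigraphs with no edges, so they also serve as the base of the induction.

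\emph{Recurrence.} Suppose $m\ge2$ and let $e=vw$ be a non-loop edge with $w\neq v$. Split the subsets $S\subseteq E$ according to whether they contain $e$.
\begin{itemize}
\item The subsets with $e\notin S$ are exactly the edge subsets of $G-e$. The component of $v$ in $(V,S)$ is the same as in $G-e$, so these subsets contribute $Q(G-e,v,m)$.
\item The subsets $S\ni e$ correspond bijectively to the subsets $S'=S\setminus\{e\}$ of the edge set of $G/e$; let $v^*$ denote the merged vertex. The component of $v$ in $(V,S)$ contains $w$, and it is mapped onto the component of $v^*$ in $G/e$ with edge set $S'$, which has exactly one fewer vertex. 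So $S\in A(G,v,m)$ if and only if $S'\in A(G/e,v^*,m-1)$, and $(-1)^{|S|}=-(-1)^{|S'|}$.
\end{itemize}
Putting the two parts together,
\[
Q(G,v,m)=Q(G-e,v,m)+(-1)^{m-1}\cdot(-1)\cdot(-1)^{m-2}\,Q(G/e,v^*,m-1)=Q(G-e,v,m)+Q(G/e,v^*,m-1).
\]

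\emph{Conclusion.} Both $G-e$ and $G/e$ have $|E|-1$ edges, and $m-1\ge1$. By the induction hypothesis both terms on the right are nonnegative, so $Q(G,v,m)\geq 0$.

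The only delicate step is checking the sign bookkeeping and the bijection in the contraction case. Specifically:
\begin{itemize}
\item Contracting $e$ must lower the size of $v$'s component by exactly one. This holds because $e\in S$ forces $w$ into that component.
\item The factor $(-1)^{m-1}$ must convert into $(-1)^{m-2}$ with the extra sign from removing $e$, so that the recurrence has a plus sign rather than a minus sign. This positivity of the recurrence is the whole point of the normalization in $Q$.
\end{itemize}
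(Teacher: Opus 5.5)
Your proof is correct and follows the paper's argument. It uses the same trivial cases ($m=1$, and $v$ incident to no non-loop edge) and the same deletion--contraction relation $Q(G,v,m)=Q(G-e,v,m)+Q(G/e,v,m-1)$ along a non-loop edge at $v$. The only cosmetic difference is that you run a single induction on $|E|$, with the statement quantified over all $m$ and $v$, whereas the paper nests an induction on $|E|$ inside one on $m$; both work because $G-e$ and $G/e$ each have $|E|-1$ edges.
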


\begin{proof}
We induct on $m$, and for each fixed value of $m$ we induct on $|E|$.  When $m=1$, the set $A(G,v,1)$ consists of all of the $2^{|E|}$ subsets of $E$, and the expression of interest is
\begin{equation}\label{eq:m=1}
\sum_{S \subseteq E}(-1)^{|S|}=\begin{cases}
0, &\text{if } E \neq \emptyset;\\
1, &\text{if } E=\emptyset,
\end{cases}
\end{equation}
which is indeed nonnegative.

We now turn to the inductive step.  Fix some $m\geq 2$, and induct on $|E|$.  Consider the edges incident to $v$.  If $v$ is not incident to any non-loop edges (which, in particular, occurs for the base case $|E|=0$), then $A(G,v,m)$ is empty and the sum in question vanishes.  Now suppose that $v$ is incident to some non-loop edge $e=\{v,u\} \in E$ (so $u \neq v$).  Conditioning on whether or not $S$ contains $e$ leads to the deletion-contraction relation
\begin{align*}
Q(G,v,m) &=(-1)^{m-1}\sum_{e \in S \in A(G,v,m)}(-1)^{|S|}+(-1)^{m-1}\sum_{e \notin S \in A(G,v,m)}(-1)^{|S|}\\
 &=(-1)^{m-1}\sum_{S' \in A(G/e,v,m-1)}(-1)^{|S'|+1}+(-1)^{m-1}\sum_{S \in A(G-e,v,m)}(-1)^{|S|}\\
 &=Q(G/e,v,m-1)+Q(G-e,v,m) \geq 0.
\end{align*}
Here $G-e:=(V,E \setminus \{e\})$ is the graph $G$ with $e$ deleted, and $G/e$ denotes the contraction of $G$ by the edge $e$ (keeping the resulting loops and parallel edges), where we have retained the label $v$ for the new vertex that corresponds to both $v,u$ from $G$.  The nonnegativity of the two quantities on the last line is guaranteed by the induction hypothesis.
\end{proof}

We finally combine the pieces to deduce Proposition~\ref{prop:hook}.

\begin{proof}[Proof of Proposition~\ref{prop:hook}]
Let $\lambda$ be a hook partition of size $n$, and let $G=(V,E)$ be a graph with $n$ vertices.  We first dispose of the edge case $\lambda=(1^n)$.  Here, $c_{(1^n)}(G)$ is equal to the coefficient of $\sum_{i} x_i^n$ in $X(G)$, which is of course nonnegative (since every monomial has nonnegative coefficient in $X(G)$). We henceforth restrict our attention to the case where $\lambda=(m,1^{n-m})$ for some $2 \leq m \leq n$.

From \eqref{eq:stanley} and Lemma~\ref{lem:hook-in-power-2} we calculate
\begin{align}
c_{\lambda}(G)=[e_{(m,1^{n-m})}](G) &=[e_{(m,1^{n-m})}] \left(\sum_{S \subseteq E}(-1)^{|S|}p_{\lambda(S)}\right) \notag\\
 &=(-1)^{m-1}\sum_{S \subseteq E}~ \sum_{j: \lambda_j(S) \geq m} (-1)^{|S|} \lambda_j(S), \label{eq:main-expr}
\end{align}
where $\lambda_j(S)$ denotes the $j$-th part of the partition $\lambda(S)$.  The inner sum counts the vertices whose components have size at least $m$ in the subgraph of $G$ with edge set $S$.  Swapping the order of summation, we can write
\begin{equation}\label{eq:Q's}
c_{\lambda}(G)=\sum_{v \in V}(-1)^{m-1}\sum_{S \in A(G,v,m)}(-1)^{|S|}=\sum_{v \in V} Q(G,v,m),
\end{equation}
and Lemma~\ref{lem:deletion-contraction} tells us that the summand for each $v$ is nonnegative.
\end{proof}

It is \emph{a priori} a bit surprising that the nonnegativity of $c_{(m,1^{n-m})}(G)$ is certified vertex-by-vertex.  
For some motivation, notice that on attempting to use deletion-contraction directly with \eqref{eq:main-expr}, one quickly encounters nonnegative linear combinations of expressions of the form
$$\sum_{v \in V}(-1)^{m_v-1}\sum_{S \in A(G,v,m_v)}(-1)^{|S|}$$
for various multigraphs $G=(V,E)$ and various choices of weights $(m_v)_{v \in V}$. If one suspects that these expressions are nonnegative for many different choices of $(m_v)_{v \in V}$, then it is natural to hope that in fact the summand for each $v$ is nonnegative.  This is precisely what Lemma~\ref{lem:deletion-contraction} achieves.

Theorem~\ref{thm:main} is the joint statement of Propositions~\ref{prop:non-hook} and~\ref{prop:hook}.

\subsection{Combinatorial interpretations of nonnegativity}\label{sec:combinatorial}

The nonnegativity of $Q(G,v,m)$ has a neat combinatorial interpretation, due largely to the kind suggestion of an anonymous referee.  Let $G=(V,E)$ be a finite multigraph with a distinguished vertex $v \in V$.  Obtain the subgraph $\overline G=(\overline{V},E)$ of $G$ by deleting all vertices of $V \setminus \{v\}$  that are isolated (i.e., have no incident edges); note that $\overline{G}$ implicitly depends on $v$.  Let $Z(\overline G,v)$ denote the set of acyclic orientations of $\overline G$ in which $v$ is the unique sink (vertex of out-degree $0$).  Let $s(\omega)$ denote the number of sources (vertices of in-degree $0$) of an acyclic orientation $\omega$.  The following proposition provides a combinatorial interpretation for the quantity $Q(G,v,m)$.

\begin{proposition}\label{prop:comb-interpret}
Let $m \geq 1$, let $G=(V,E)$ be a finite multigraph, and let $v \in V$ be a vertex.  Then we have the formula
\begin{equation}\label{eq:comb-interpret}
Q(G,v,m)=\sum_{\omega \in Z(\overline G,v)} \binom{s(\omega)-1}{|\overline{V}|-m}.
\end{equation}    
\end{proposition}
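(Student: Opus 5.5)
We prove \eqref{eq:comb-interpret} by induction, following the scheme of Lemma~\ref{lem:deletion-contraction}: induct on $m$, and for fixed $m$ on $|E|$. The plan is to show that the right-hand side, call it $R(G,v,m)$, satisfies the same base cases and the same recursion
$$R(G,v,m)=R(G/e,v,m-1)+R(G-e,v,m)$$
for a non-loop edge $e=\{v,u\}$.

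First the preliminary observations and base cases. Isolated vertices other than $v$ affect neither side, since they never join the component of $v$. Loops make both sides vanish: no acyclic orientation exists, and on the left, toggling the loop is a sign-reversing involution on $A(G,v,m)$. We may therefore assume $G=\overline G$ and that $G$ is loopless. If $v$ has no incident edge, then $\overline G$ is the single vertex $v$ together with components not containing $v$. If $E=\emptyset$, there is one orientation with $s=1$, and the right side is $\binom{0}{1-m}$, which is $1$ for $m=1$ and $0$ otherwise; this matches \eqref{eq:m=1} and the empty case of the lemma. If $v$ is isolated but $E\neq\emptyset$, any acyclic orientation has another sink, so $Z=\emptyset$ and the right side is $0$, matching. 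For $m=1$ with $E\neq\emptyset$, the left side is $0$ by \eqref{eq:m=1}, so we need $\sum_{\omega\in Z}\binom{s(\omega)-1}{|\overline V|-1}=0$. Since $s(\omega)\le|\overline V|-1$ when there are edges (the sink $v$ is not a source), every binomial vanishes.

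The key step is to verify the deletion--contraction recursion for $R$, which should be the classical deletion--contraction of acyclic orientations with a unique sink (the Greene--Zaslavsky setup). Orientations in $Z(G,v)$ have $e$ directed $u\to v$. For $\omega\in Z(G,v)$, consider whether reversing $e$ destroys acyclicity. The standard bijection should split $Z(G,v)$ into two pieces: orientations of $G-e$ with unique sink $v$ (when $u$ still has another out-edge), and orientations of $G/e$ with unique sink $v$ (arising when $u\to v$ is forced). I would try the standard map first. It sends $\omega$ to $\omega\setminus e$ if $\omega\setminus e$ still has unique sink $v$; otherwise $u$ becomes a sink, meaning $u$'s only out-edge was $e$, and we contract. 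Here $\overline{G-e}$ may lose $u$ if $u$ becomes isolated, which must be tracked.

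The main obstacle is the source counts and the vertex counts. Contraction drops $|\overline V|$ by one, matching $m\to m-1$, provided the number of sources is preserved; sources other than $u,v$ are unaffected, and $u$ is not a source in $G/e$ either, as it merges with the sink. For deletion, removing $e$ can turn $u$ into a new source, namely when $e$ was $u$'s only edge, in which case $u$ becomes isolated and is removed from $\overline{G-e}$. This changes $|\overline V|$, so a Pascal-type identity $\binom{s-1}{N-m}=\binom{s-2}{N-m}+\binom{s-2}{N-m-1}$ should reconcile the terms. I expect the careful bookkeeping of these boundary cases, and possibly a refined bijection that is not simply the naive one, to be the real work. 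If the naive bijection fails to match the binomials exactly, I would instead prove the identity by expanding $\binom{s-1}{N-m}$ as counting $(N-m)$-subsets of non-special sources and running the induction on these decorated orientations.
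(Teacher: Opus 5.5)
Your overall strategy is the paper's: induct as in Lemma~\ref{lem:deletion-contraction}, match the base cases, and show that the right-hand side $R$ satisfies $R(G,v,m)=R(G/e,v,m-1)+R(G-e,v,m)$. Your reductions (isolated vertices, and loops via the toggling involution) and your base cases are correct. The gap is that this recursion for $R$ is the whole content of the proposition, and you only anticipate it ("should", "I expect \ldots to be the real work"). Your sketch of it also gets the bookkeeping wrong in exactly the cases that matter.

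When $e$ is the only edge at $u$, $u$ is already a source of $\omega$; deletion does not create it as a new source. Here $\overline{G/e}=\overline{G-e}$ and $\omega/e=\omega-e$, and both have $s(\omega)-1$ sources. This contradicts your claim that contraction preserves the source count. Nor does $\omega$ go to just one side under a bijection: every $\omega$ contributes to both sums, with weights $\binom{s-2}{|\overline V|-m}+\binom{s-2}{|\overline V|-m-1}$. That sum equals $\binom{s-1}{|\overline V|-m}$ by Pascal only once you check $s(\omega)\ge 2$, which holds because $v$ is not isolated in $\overline G-e$, forcing a second source.

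You also omit the case where $e$ is the only edge at both $u$ and $v$. If $\overline G$ has other edges, all three $Z$-sets are empty. If $e$ is the only edge of $\overline G$, then $v$ becomes a new isolated source in $\overline{G/e}$ and $\overline{G-e}$, so the source count does not drop and your Pascal template does not apply. One must instead verify $\binom{0}{2-m}=\binom{0}{2-m}+\binom{0}{1-m}$ directly, and this is where $m\ge 2$ is used.

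What is needed is the paper's three-way split according to whether $u$, and then $v$, is isolated in $\overline G-e$:

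(i) $u$ not isolated in $\overline G-e$. Send $\omega$ to $\omega/e$ if $u$ is a sink of $\omega-e$, and to $\omega-e$ otherwise. This is a genuine bijection $Z(\overline G,v)\to Z(\overline{G/e},v)\sqcup Z(\overline{G-e},v)$ preserving $s$. In the deletion branch you must also note that $v$ does not become a source, because a directed path from $u$ still ends at $v$.

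(ii) $u$ isolated, $v$ not. Use Pascal as above.

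(iii) Both isolated. Check directly.

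With these cases written out, your outline becomes the paper's proof, and the fallback to decorated orientations is unnecessary.
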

The right-hand side is patently nonnegative since the weights $\binom{s(\omega)-1}{|\overline{V}|-m}$ are always nonnegative.  Proposition~\ref{prop:comb-interpret} provides the following combinatorial interpretation for the coefficient $c_{(m,1^{n-m})}(G)$ when $m \geq 2$.  Let $Z(G)$ denote the set of acyclic orientations of $G$ with a unique sink.

\begin{corollary}\label{cor:comb}
Let $n \geq m \geq 1$, and let $G=(V,E)$ be a finite multigraph with $n$ vertices.  Assume that $G$ has no isolated vertices.  Then we have the formula
$$c_{(m,1^{n-m})}(G)=\sum_{\omega \in Z(G)}\binom{s(\omega)-1}{n-m}.$$
\end{corollary}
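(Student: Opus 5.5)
The plan is to sum the vertex-by-vertex identity of Proposition~\ref{prop:comb-interpret} over all $v \in V$, using the decomposition \eqref{eq:Q's} of $c_{(m,1^{n-m})}(G)$. Two preliminary checks are needed. First, Stanley's identity \eqref{eq:stanley} must remain valid for multigraphs. For a loop at a vertex, the inclusion-exclusion factor is $(1-1)=0$, which matches $X(G)=0$. Parallel edges are handled by the same inclusion-exclusion over the events ``edge $e$ is monochromatic,'' so no new issue arises. Consequently the computation \eqref{eq:main-expr}--\eqref{eq:Q's}, which relied only on \eqref{eq:stanley} and Lemma~\ref{lem:hook-in-power-2}, gives
$$c_{(m,1^{n-m})}(G)=\sum_{v\in V} Q(G,v,m)$$
for every multigraph $G$ on $n$ vertices and every $2\le m\le n$.

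Second, because $G$ has no isolated vertices, deleting the isolated vertices of $V\setminus\{v\}$ removes nothing, so $\overline G = G$ for every choice of $v$. Proposition~\ref{prop:comb-interpret} then gives
$$Q(G,v,m)=\sum_{\omega\in Z(G,v)}\binom{s(\omega)-1}{n-m}.$$
The sets $Z(G,v)$, for $v\in V$, partition $Z(G)$ according to the location of the unique sink. Summing over $v$ therefore yields the formula for $2\le m\le n$.

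The remaining case is $m=1$, where Lemma~\ref{lem:hook-in-power-2} does not apply, and I would handle it directly. By the first part of Lemma~\ref{lem:hook-in-power} and multiplicativity, $[e_{(1^n)}](p_\lambda)=1$ for every partition $\lambda$ of size $n$. Hence \eqref{eq:stanley} gives
$$c_{(1^n)}(G)=\sum_{S\subseteq E}(-1)^{|S|},$$
which by \eqref{eq:m=1} equals $0$ whenever $E\neq\emptyset$. Since $G$ has no isolated vertices, $E$ is indeed nonempty, so $c_{(1^n)}(G)=0$. On the right-hand side, the weight $\binom{s(\omega)-1}{n-1}$ is nonzero only if $s(\omega)=n$, meaning every vertex is a source. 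That forces an edgeless graph, which is excluded. If $G$ has loops, then $Z(G)=\emptyset$ anyway, since no acyclic orientation exists. So both sides vanish.

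All of the substance lies in Proposition~\ref{prop:comb-interpret}, which I am taking as given. The only real point of care is confirming that \eqref{eq:Q's} extends from graphs to multigraphs. If that extension caused trouble, I would fall back on the observation that $X(G)$, the unique-sink acyclic orientations, and their source counts all behave compatibly under removing parallel edges and under the presence of loops, and I would check this directly.
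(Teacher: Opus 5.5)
Your proposal is correct and follows the paper's proof. For $m\ge 2$ you sum Proposition~\ref{prop:comb-interpret} over $v$ via \eqref{eq:Q's}, using $\overline G=G$ and the partition of $Z(G)$ by sink; for $m=1$ you check directly that both sides vanish (the paper phrases this as both sides equalling the indicator of $G$ being a single edgeless vertex, a case your hypothesis excludes), and your explicit check that \eqref{eq:stanley}, hence \eqref{eq:Q's}, holds for multigraphs fills in a point the paper leaves implicit.
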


\begin{proof}
For $m=1$, it is straightforward to check that the quantity in question is equal to the indicator function of the event that $G$ has a single vertex and no edges (compare with \eqref{eq:m=1} above and with the first full paragraph of the proof of Proposition~\ref{prop:comb-interpret} below).  For $m \geq 2$, apply Proposition~\ref{prop:comb-interpret} to each summand of \eqref{eq:Q's}, and then note that the reduced graph $\overline{G}$ is equal to $G$ for all $v \in V$, and that $Z(G)=\cup_{v \in V}Z(G,v)$.
\end{proof}

We remark that the $m=|V|$ instance of Proposition~\ref{prop:comb-interpret} recovers a classical result of Greene and Zaslavsky~\cite{GZ} from 1983.  They showed (as stated in \cite[Theorem 1.2(b)]{stanley}) that if $G=(V,E)$ is a finite graph and $v \in V$ is any vertex, then $|Z(G,v)|$ is $(-1)^{|V|-1}$ times the linear coefficient of the chromatic polynomial $\chi_G$.  It is well known (e.g., via inclusion-exclusion) that the linear coefficient of $\chi_G$ is $\sum_{S \subseteq E \text{ spanning}}(-1)^{|S|}$.  The spanning subsets $S$ are precisely the sets in $A(G,v,|V|)$, so the result of Greene and Zaslavsky is equivalent to the assertion that $$|Z(G,v)|=(-1)^{|V|-1}\sum_{S \in A(G,v,|V|)}(-1)^{|S|}=Q(G,v,|V|).$$
If $\overline{G}=G$, then Proposition~\ref{prop:comb-interpret} with $m=|V|$ tells us that
$$Q(G,v,|V|)=\sum_{\omega \in Z(G,v)}\binom{s(\omega)-1}{|V|-|V|}=\sum_{\omega \in Z(G,v)}1=|Z(G,v)|.$$
If $\overline{G}\neq G$, then again Proposition~\ref{prop:comb-interpret} with $m=|V|$ tells us that
$$Q(G,v,|V|)=\sum_{\omega \in Z(\overline{G},v)}\binom{s(\omega)-1}{|\overline{V}|-|V|}=0=|Z(G,v)|,$$
where the second equality uses $|\overline{V}|<|V|$, and the third equality uses the fact that every acyclic orientation of $G$ has multiple sinks (since $G$ has multiple connected components).  Thus one can view Proposition~\ref{prop:comb-interpret} as a generalization of Greene--Zaslavsky.

\begin{proof}[Proof of Proposition~\ref{prop:comb-interpret}]
We roughly follow the inductive scheme from the proof of Lemma~\ref{lem:deletion-contraction}.

Suppose that $m=1$.  If $E=\emptyset$, then $\overline{G}$ has only the single vertex $v$ and no edges, in which case $Z(\overline G,v)$ consists of only the trivial orientation; this orientation has $v$ as its only sink and is counted with weight $\binom{1-1}{1-1}=1$. If $E \neq \emptyset$, then each acyclic orientation in $Z(\overline{G},v)$ has at most $|\overline V|-1$ sources, and hence all of the summands on the right-hand side of \eqref{eq:comb-interpret} vanish.  This agrees with \eqref{eq:m=1}.

Suppose now that $m \geq 2$.  To start, consider the degenerate case where $v$ is not incident to any non-loop edges; we must show that the right-hand side of \eqref{eq:comb-interpret} vanishes (recall that $Q(G,v,m)$ vanishes because $A(G,v,m)$ is empty).  The set $Z(\overline{G},v)$ can be nonempty only when $\overline{G}$ has the single vertex $v$ and no edges.  In this case, $Z(\overline{G},v)$ consists of only the trivial orientation (with $1$ source), and its contribution to the right-hand side of \eqref{eq:comb-interpret} is $\binom{1-1}{1-m}=0$.

It remains to consider the main case where $v$ is incident to some non-loop edge $e=\{v,u\} \in E$.  Recall from the proof of Lemma~\ref{lem:deletion-contraction} that $Q$ satisfies the deletion-contraction relation
$$Q(G,v,m)=Q(G/e,v,m-1)+Q(G-e,v,m).$$
Thus it suffices to establish that the right-hand side of \eqref{eq:comb-interpret} satisfies the corresponding relation
\begin{equation}\label{eq:del-contr-binom}
\sum_{\omega \in Z(\overline G,v)} \binom{s(\omega)-1}{|\overline{V}|-m}=\sum_{\omega \in Z(\overline{G/e},v)} \binom{s(\omega)-1}{|V(\overline{G/e})|-(m-1)}+\sum_{\omega \in Z(\overline{G-e},v)} \binom{s(\omega)-1}{|V(\overline{G-e})|-m}.
\end{equation}
For each orientation $\omega \in Z(\overline{G},v)$, let $\omega/e$ and $\omega-e$ denote the restrictions of $\omega$ to the graphs $\overline{G/e}$ and $\overline{G-e}$ (respectively).  We will show that the contribution of $\omega$ to the left-hand side of \eqref{eq:del-contr-binom} is equal to the combined contributions of $\omega/e$ and $\omega-e$ to the right-hand side of \eqref{eq:del-contr-binom}.  We condition (mainly) on whether or not $u$ is isolated in $\overline{G}-e$, i.e., whether or not $u$ is omitted from $\overline{G-e}$.

First, suppose that $u$ is not isolated in $\overline{G}-e$.  Then $Z(\overline{G/e},v)$ consists of the orientations $\omega'=\omega/e$ for $\omega \in Z(\overline{G},v)$ such that $u$ is a sink of $\omega-e$, and $Z(\overline{G-e},v)$ consists of the orientations $\omega'=\omega-e$ for $\omega \in Z(\overline{G},v)$ such that $u$ is not a sink of $\omega-e$; in each case $\omega$ is uniquely determined by $\omega'$ since $e$ must be oriented from $u$ to $v$.  Fix some $\omega \in Z(\overline{G},v)$.  If $u$ is a sink of $\omega-e$, then $\omega/e \in Z(\overline{G/e},v)$ has $s(\omega/e)=s(\omega)$ (the sources are unchanged), and its contribution to the right-hand side of $\eqref{eq:del-contr-binom}$ is
    $$\binom{s(\omega/e)-1}{(|\overline{V}|-1)-(m-1)}=\binom{s(\omega)-1}{|\overline{V}|-m},$$
    while $\omega-e \notin Z(\overline{G-e})$ does not contribute to the right-hand side of \eqref{eq:del-contr-binom}.
    Likewise, if $u$ is a not sink of $\omega-e$, then $\omega-e \in Z(\overline{G-e},v)$ has $s(\omega-e)=s(\omega)$ (note that $v$ cannot become a source in $\omega-e$ because there is still a directed path from $u$ to $v$), and its contribution to the right-hand side of $\eqref{eq:del-contr-binom}$ is
    $$\binom{s(\omega-e)-1}{|\overline{V}|-m}=\binom{s(\omega)-1}{|\overline{V}|-m},$$
    while $\omega/e \notin Z(\overline{G/e})$ does not contribute to the right-hand side of \eqref{eq:del-contr-binom}.
    Summing over $\omega \in Z(\overline{G},v)$ establishes \eqref{eq:del-contr-binom}.

Second, suppose that $u$ is isolated in $\overline{G}-e$ and $v$ is not.  Then $$Z(\overline{G/e},v)=\{\omega/e: \omega \in Z(\overline{G},v)\} \quad \text{and} \quad Z(\overline{G-e},v)=\{\omega-e: \omega \in Z(\overline{G},v)\}.$$
    For each $\omega \in Z(\overline{G},v)$, we have $s(\omega/e)=s(\omega-e)=s(\omega)-1$ (since $u$ is a source of $\omega$ but not of $\omega/e$ or of $\omega-e$).  Thus the combined contribution of $\omega/e$ and $\omega-e$ to the right-hand side of \eqref{eq:del-contr-binom} is
    $$\binom{s(\omega/e)-1}{(|\overline{V}|-1)-(m-1)}+\binom{s(\omega-e)-1}{(|\overline{V}|-1)-m}=\binom{s(\omega)-2}{|\overline{V}|-m}+\binom{s(\omega)-2}{|\overline{V}|-m-1}=\binom{s(\omega)-1}{|\overline{V}|-m}.$$
    For the use of Pascal's identity in the last equality, it suffices to check that $s(\omega)>1$: Indeed, $u$ is a source of $\omega$, and the non-isolation of $v$ in $\overline{G}-e$ guarantees that $\omega$ has another source.

Finally, suppose that both $u,v$ are isolated in $\overline{G}-e$.  If $e$ is not the only edge of $\overline{G}$, then $Z(\overline{G},v),Z(\overline{G/e},v),Z(\overline{G-e},v)$ are all empty (since every orientation has a sink other than $v$) and there is nothing to show.  If $e$ is the only edge of $\overline{G}$, then $Z(\overline{G},v)$ consists of only the orientation that orients $e$ from $u$ to $v$ (with $1$ source), and $Z(\overline{G/e},v)=Z(\overline{G-e},v)$ consists of only the trivial orientation (with $1$ source).  In this case the desired identity is
    $$\binom{1-1}{2-m}=\binom{1-1}{1-(m-1)}+\binom{1-1}{1-m},$$
    which holds due to our assumption that $m \geq 2$. This completes the proof.
\end{proof}

With hindsight, we can motivate the choice of the weight $\binom{s(\omega)-1}{|\overline{V}|-m}$.  The application of Pascal's identity (in the second main case) dictates that the weight should be a binomial coefficient of the form $\binom{s(\omega)-\alpha}{|\overline{V}|-m-\beta}$ for some integers $\alpha,\beta$.  The values of $\alpha,\beta$ can then be ascertained by examining the base case $m=1$ and the ``exceptional case'' where $e$ is the only edge of $\overline{G}$.

The deletion-contraction relation for $Q(G,v,m)$ suggests the possibility that $Q(G,v,m)$ might be a specialization of a new Tutte-type polynomial for graphs with a ``distinguished'' vertex.  It would be interesting to construct such a polynomial and ascertain if it has other specializations with combinatorial interpretations in terms of rooted objects.

\section*{Acknowledgments and AI usage}

The author was supported in part by a NSF Mathematical Sciences Postdoctoral Research Fellowship under grant DMS-2501336.  I thank Steph van Willigenburg for helpful conversations.  I also thank the anonymous referee for their comments, and in particular their suggestion leading to Section~\ref{sec:combinatorial}.

The proof of Proposition~\ref{prop:hook} is due mostly to ChatGPT 5.5 Pro.

\end{document}